\newcommand{\Hom}{{\textnormal{Hom}}}
\newcommand{\Ext}{{\textnormal{Ext}}}
\newcommand{\im}{{\textnormal{Im }}}
\newcommand{\HH}{{\textnormal{HH}}}
\newcommand{\rad}{{\textnormal{rad}}}
\newcommand{\C}{{\mathcal C}}
\newcommand{\B}{{\mathcal B}}
\newtheorem{teo}{Theorem}
\newtheorem{prop}[teo]{Proposition}
\newtheorem{lema}[teo]{Lemma}
\newproof{pf}{Proof}
\newdefinition{deft}{Definition}
\newtheorem{obs}[deft]{Remark}
\newtheorem{ex}[deft]{Example}
\lstdefinelanguage{Sage}{
  keywords={if,else,for,while,return,break,continue,
            def,class,import,from,as},
  morekeywords={ZZ,QQ,RR,CC,GF,PolynomialRing},
  sensitive=true,
  morecomment=[l]\#,
  morestring=[b]",
}
\tiny\color{gray},
\journal{Linear Algebra and its Applications}
\begin{document}

\begin{frontmatter}



\title{Projective resolutions of simple modules and Hochschild cohomology for incidence algebras} 

 \author{V. Bekkert}

\author{J. W. MacQuarrie}
 

\author{J. Marques} 

\begin{abstract}
We give a practical, algorithmic method to calculate minimal projective resolutions of simple modules for a finite dimensional incidence $k$-algebra $\Lambda$, where $k$ is a field.  We apply the method to the calculation of Ext groups between simple $\Lambda$-modules, Hochschild cohomology groups $\HH^i(\Lambda, \Lambda)$, and singular cohomology groups of finite $T_0$ topological spaces with coefficients in $k$.
\end{abstract}

%

\begin{keyword}
Hochschild cohomology\sep posets\sep finite spaces\sep projective resolution.

\end{keyword}

\end{frontmatter}



\label{sec1}
\section{Introduction}
When $\Lambda$ is a finite dimensional algebra, a projective resolution of a finitely generated $\Lambda$-module is \emph{minimal} if each map of the resolution, with codomain restricted to the kernel of the previous one, is a projective cover \cite[Definition I.5.7(b)]{ASS06}.  We present here a practical algorithm yielding minimal projective resolutions 
of the simple modules for a finite dimensional incidence algebra $\Lambda$ (see the first paragraph of Section \ref{section construction of resolution} for the formal definition)   -- Theorem \ref{projectiveresolution}.  The computations require only very basic and computationally efficient operations in linear algebra: calculations of kernels of linear maps, intersections and sums of subspaces, and the calculation of bases for complements of subspaces.  An algorithm due to Bongartz and Butler (\cite{B83}, cf.\ \cite[Proposition 3.5]{C89}) calculates projective resolutions in the same circumstances, but they are not necessarily minimal and the computation is more cumbersome, working with ideals of $\Lambda$ rather than just linear algebra.  

Our algorithm can be used to calculate the groups $\Ext^i_{\Lambda}(S,T)$, where $S,T$ are simple $\Lambda$-modules -- Proposition \ref{extgroups}.  The main application of this fact comes via a result of Cibils \cite[Proposition 2.1]{C89} that gives the Hochschild cohomology groups $\HH^i(\Lambda, \Lambda)$ of $\Lambda$ in terms of Ext groups between two simple modules for a related incidence algebra $\Lambda^*$, and hence (via Proposition \ref{extgroups}) the algorithm gives a practical way to calculate the Hochschild cohomology groups of an incidence algebra -- Theorem \ref{hochschildcohomology}.  The computation of Hochschild cohomology groups of incidence algebras has been considered in the literature \cite{GS83, C89, H89, IZ90, GR01}.  For instance, \cite{GS83} explains how these groups can be calculated as the cohomology groups of simplicial complexes, while in \cite{GR01}, explicit formulas for the Hochschild cohomology groups of important particular classes of incidence algebras are given.  However, there is to our knowledge no simple, practical algorithm available to calculate the Hochschild cohomology groups of an arbitrary finite dimensional incidence algebra, as given by Theorem \ref{hochschildcohomology}.

The algorithm has an application to topology -- Remark \ref{obs top spaces}.  Namely, finite $T_0$ topological spaces are in natural bijection with finite posets, and via results due to McCord \cite{M66} and Gerstenhaber and Schack \cite{GS83}, it follows that Theorem \ref{hochschildcohomology} also yields a practical computation of the singular cohomology groups of a finite $T_0$ topological space with coefficients in $k$. 

The results presented here have theoretical applications: in future work, we will show how they provide new ways to reduce the poset without altering the Hochschild cohomology groups.

\section{Construction of the resolution}\label{section construction of resolution}

Let $k$ be a field and $X$ be a poset, always finite.  From $X$, we construct an acyclic quiver $Q$ -- the Hasse diagram of $X$ -- as follows: the vertices of $Q$ are the elements of $X$, and there is an arrow from the vertex $a$ to the vertex $b$ if, and only if, $a<b$ and there is no $c\in X$ such that $a<c<b$.  The incidence algebra of $X$ is $\Lambda = kQ/I$, where $I$ is the two-sided ideal of $kQ$ generated by all differences of parallel paths (i.e., paths with the same source and target).  Thus whenever $a\leqslant b$ in $X$, any two paths from $a$ to $b$ in $Q$ correspond to the same element of $\Lambda$, which we denote by $c_{a,b}$.  For each $x\in X$, we denote by $S_x$ and $P_x$ the simple and indecomposable right projective $\Lambda$-modules associated to $x$. We denote the sets of immediate successors and predecessors of $x$ in $X$  by $x^+$ and $x^-$, respectively.

\medskip 

Given a poset $X$ and $x\in X$, for each $i\geqslant 0$, we will define a set of what we call ``$i$-cycles'', which will index the indecomposable summands of the projective module $P_i$ appearing in a minimal projective resolution
$$\xymatrix@R=0.8em{ \cdots\ar[r]& \ar[r] P_1 \ar[r] & P_0 \ar[r] & S_x \ar[r] & 0}$$
of the simple $\Lambda$-module $S_x$.

\subsection{Definition of $i$-cycles}\label{subsec icycle def}
Given a finite set $L$, let $kL$ be the $k$-vector space with basis $L$, and given $w = \sum_{l\in L}\alpha_ll\in kL$ ($\alpha_i\in k$), let $supp(w) = \{l\in L\,|\,\alpha_l\neq 0\}$.  

We define the set $\C^i$ of $i$-cycles for each $i \geqslant 0$ and a fixed element $x \in X$. Let 
	$$\C^{0}=\{x\}\hbox{ and }\C^{1} = \{(x,y)\,|\,y\in x^+\}.$$  
Define the linear map $\partial_{1} : k\C^{1}\to k\C^{0}$ on the basis $\C^{1}$ by $(x,y)\mapsto x$. For convenience we also define $k\C^{-1} = 0$ and $\partial_{0} : k\C^{0}\to k\C^{-1}$ to be the zero map. We define the $i$-cycles $\C^i$ for $i\geqslant 2$ recursively: For fixed $i\geqslant 1$, suppose that $\C^i\subseteq \ker\partial_{i-1}\times X$ and the map $\partial_i : k\C^i\to k\C^{i-1}$ are already defined. For each $z\in X$, consider the $k$-subspace $(\ker\partial_{i})_z$	of $\ker\partial_{i}$ given by 
	$$\{w=\sum_{t=1}^n \lambda_t(w_t,z_t)\in \ker\partial_{i}\ |\ z_t < z \textnormal{ for every } (w_t,z_t)\in supp(w) \}.$$
Define also $(\ker\partial_{i})_{z^-}:=\sum_{z'\in z^-}(\ker\partial_{i})_{z'}$. Let ${\mathcal{D}^{i+1}_z}$ be a basis of a complement of $(\ker\partial_{i})_{z^-}$ in $(\ker\partial_{i})_{z}$ and define 
	$$\B^{i+1}_z = \{(w,z)\,|\,w\in {\mathcal{D}^{i+1}_z}\}.$$ 
The set of $(i+1)$-cycles is defined to be
	$$\C^{i+1}:=\bigcup_{z\in X}\B^{i+1}_z.$$
Define further the linear map $\partial_{i+1}:k\C^{i+1}\rightarrow k\C^i$ on the given basis by 
	$$\partial_{i+1}((w,z))=w$$

\begin{obs}\label{obs z' are less than z}
We make explicit a basic but fundamental observation about the $i$-cycles, because it will be used a great deal in what follows.  Namely, if ever $(w,z)\in k\mathcal{C}^{i+1}$ for some $i\geqslant 1$  and $(w',z')\in k\mathcal{C}^i$ is in $supp(w)$ then, $w$ being an element of $(\ker\partial_{i})_z$, necessarily $z'<z$.
\end{obs}

\begin{obs}\label{obs useful basis of ker partial}
Indexing the elements of the poset $X$ by $1, \hdots, |X|$ in such a way that the index of any successor of any $z$ in $X$ is greater than that of $z$, an easy induction on $z$ shows that for each $i\geqslant 1$ and each $z\in X$, we can choose a basis $B$ of $(\ker\partial_{i})_{z}$ such that for every $b\in B$, $(b,z_j)\in \C^{i+1}$ for some $z_j\leqslant z$.
\end{obs}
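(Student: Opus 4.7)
The plan is to induct on the index of $z$ in a linear extension $z_1, \ldots, z_n$ of $X$ (with successors always receiving larger indices, as stipulated), keeping $i \geq 1$ fixed throughout. At each stage I build the required basis for $z = z_k$ out of the bases previously constructed at the immediate predecessors, together with the set ${\B^{i+1}_z}'$ that appears in the definition of $\C^{i+1}$.

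Given $z = z_k$, every $z' \in z^-$ satisfies $z' < z$ in $X$ and hence has index strictly less than $k$, so the inductive hypothesis supplies a basis $B_{z'}$ of $(\ker\partial_i)_{z'}$ each of whose elements has the desired property. Because $(\ker\partial_i)_{z^-} = \sum_{z'\in z^-}(\ker\partial_i)_{z'}$ by definition, the union $\bigcup_{z'\in z^-} B_{z'}$ spans this subspace, and I would extract from it a basis $B_{z^-}$. Any $b \in B_{z^-}$ lies in some $B_{z'}$, so $(b, z_j) \in \C^{i+1}$ for some $z_j \leqslant z' < z$, giving the property.

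By the construction of $\C^{i+1}$, the set ${\B^{i+1}_z}'$ is a basis of a complement of $(\ker\partial_i)_{z^-}$ in $(\ker\partial_i)_z$, hence $B := B_{z^-} \cup {\B^{i+1}_z}'$ is a basis of $(\ker\partial_i)_z$. Each $w \in {\B^{i+1}_z}'$ pairs with $z$ to give $(w,z) \in \B^{i+1}_z \subseteq \C^{i+1}$, with $z \leqslant z$, completing the induction. The case $k=1$ is subsumed: such a $z_1$ is forced to be minimal in $X$ (any strict predecessor would carry a smaller index), so $z^- = \emptyset$ and one simply takes $B = {\B^{i+1}_{z_1}}'$.

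The only point requiring real attention is the extraction of $B_{z^-}$ from the union of bases at the predecessors, since that is the one place where one might a priori fear losing the property. It is not lost, because the union is already a spanning set every element of which has the property, so any maximal linearly independent subset inherits it; everything else is formal bookkeeping driven by the recursive definition of $\C^{i+1}$.
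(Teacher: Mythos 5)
Your proposal is correct and is precisely the ``easy induction on $z$'' that the paper invokes without spelling out: at each $z$ you combine a basis extracted from the union of the inductively constructed bases at the immediate predecessors (which spans $(\ker\partial_i)_{z^-}$) with the complement basis ${\B^{i+1}_z}'$, whose elements pair with $z$ itself to land in $\C^{i+1}$. The key inclusion $(\ker\partial_i)_{z'}\subseteq(\ker\partial_i)_z$ for $z'<z$ and the observation that the spanning-set property survives extraction of a maximal independent subset are exactly the points that make the induction go through.
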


\begin{ex}
Let $X$ be the poset with the following Hasse diagram
	$$\xymatrix{ & 2\ar[r]\ar[rd]\ar[rdd] & 4\ar[r] & 7\ar[r] & 9\ar[rd] & \\ 1\ar[rd]\ar[ru]& & 5\ar[rd]\ar[ru] & & & 10 \\
	& 3\ar[ruu]\ar[ru]\ar[r] & 6\ar[r]\ar[rruu] & 8\ar[rru] & }$$
Fixing $x=1$, we have
	$$\C^{0} = \{1\}\hbox{ and } \C^1 =  \{(1,2),(1,3)\}.$$
We have $(\ker \partial_1)_z = 0$ for $z<4$ and $(\ker \partial_1)_z = k\{ r = (1,2)-(1,3)\}$ for $z\geqslant 4$, so that we may take
	$$\C^2 = \{(r,4), (r,5), (r,6)\}$$  
and 
\begin{align*}
\partial_2 : k\C^2 & \to k\C^{1} \\
(r,j) &\mapsto r\,,\quad j\in \{4,5,6\}.
\end{align*}
The kernel of $\partial_2$ has dimension two and basis 
	$$\{u = (r,4)-(r,5), v = (r,5)-(r,6)\}.$$
We have 
	$$(\ker \partial_2)_{7}=k\{ u\}\,,\, (\ker \partial_2)_{7^-} = 0\hbox{ and hence }\B_7^3 = \{(u,7)\}.$$
Similarly $\B_8^3 = \{(v,8)\}$. 
We have $(\ker \partial_2)_9=k\{u,v\} $ and $(\ker \partial_2)_{9^-}=k\{ u\}$. A possible complement of $(\ker \partial_2)_{9^-}$ in $(\ker \partial_2)_{9}$ is $k\{ u+v\}$, so we may take $\B^3_9=\{(u+v,9)\}$. Because $(\ker \partial_2)_{10}=k\{u,v\}=(\ker \partial_2)_{10^-}$, we have $\B^3_{10}=\varnothing$. Now 
	$$\C^3= \B^3_7 \cup \B^3_8\cup \B^3_9 = \{(u,7),(v,8), (u+v,9)\}.$$     
The map $\partial_3 : k\C^3  \to k\C^{2}$ sends $(u,7)$ to $u$, $(v,8)$ to $v$, $(u+v,9)$ to $u+v$.  The kernel of $\partial_3$ has dimension one and basis $\{q = (u,7)+(v,8)- (u+v,9)\}$.  Now $k\C^4_{10}=k\{ q\}$ and $k\C^4_{10^{-}} = 0$, so $\B^4_{10}=\{(q,10)\}$ and hence $\C^4=\{(q,10)\}$.  Finally, the map $\partial_4 : k\{ (q,10)\} \to k\{ (u,7), (v,8), (u+v, 9) \}$ sends $(q,10)$ to $q = (u,7)+(v,8)- (u+v,9)$ and hence is injective, so that $\C^5 = \varnothing$.  
\end{ex}

\begin{lema}\label{lema1}
Let $X$ be a poset and $x\in X$.  With the definitions as above, the sequence 
	$$C_{\bullet}: \xymatrix@R=0.8em{ \cdots \ar[r]^-{\partial_3} & k\C^2 \ar[r]^-{\partial_2} & k\C^{1} \ar[r]^-{\partial_{1}} & k\C^{0}}$$
is a chain complex.
\end{lema}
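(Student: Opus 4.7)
The plan is to verify that $\partial_i \circ \partial_{i+1} = 0$ for every $i \geqslant 0$, which, since $\partial_{i+1}$ is a linear map defined on a basis, reduces to checking the identity on each basis element $(w,z) \in \C^{i+1}$.

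For $i = 0$ the statement is automatic because $\partial_0$ was defined to be the zero map to $k\C^{-1} = 0$, so $\partial_0 \partial_1 = 0$ trivially. For $i \geqslant 1$ the argument is essentially by unwinding the definition. I would fix an arbitrary basis element $(w,z) \in \C^{i+1}$. By construction, $(w,z) \in \B^{i+1}_z$, so $w \in {\B^{i+1}_z}'$, and ${\B^{i+1}_z}'$ was chosen as a basis of a complement of $(\ker\partial_i)_{z^-}$ inside $(\ker\partial_i)_z$. In particular, $w \in (\ker\partial_i)_z \subseteq \ker\partial_i$. Applying the definition of $\partial_{i+1}$ on basis elements gives $\partial_i(\partial_{i+1}(w,z)) = \partial_i(w) = 0$, and linear extension then forces $\partial_i \circ \partial_{i+1} = 0$ on all of $k\C^{i+1}$.

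There isn't really a hard step here: the recursive definition of $\C^{i+1}$ was engineered precisely so that every basis element $(w,z)$ automatically satisfies $w \in \ker \partial_i$. The only subtlety worth mentioning carefully in the writeup is the mild well-definedness check that $\partial_{i+1}$ makes sense as a linear map (i.e.\ that $\C^{i+1}$ is a genuine set of basis symbols indexing $k\C^{i+1}$, which follows because the $\B^{i+1}_z$ are taken from bases of subspaces of $\ker\partial_i$ and are disjointly indexed by $z \in X$). Once this is noted, the chain complex identity follows by a one-line computation on basis elements.
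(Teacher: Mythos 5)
Your proposal is correct and is essentially the paper's own proof: the paper simply states that the image of $\partial_{i+1}$ lies in $\ker\partial_i$ by construction, and you have unwound that construction explicitly (each $w$ with $(w,z)\in\C^{i+1}$ lies in ${\B^{i+1}_z}'\subseteq(\ker\partial_i)_z\subseteq\ker\partial_i$). No substantive difference in approach.
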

\begin{pf}
The maps $\partial_i$ are defined to be linear, so it is enough to check that $\partial_i(\partial_{i+1}(w,z)) = 0$ for $(w,z)\in \mathcal{C}^{i+1}$.  But $w\in \ker(\partial_i)$ by the definition of $\mathcal{C}^{i+1}$, and so
$$\partial_i(\partial_{i+1}(w,z)) = \partial_i(w) = 0,$$
as required.
\end{pf}
The above complex need not be exact.  For example, with the poset
	$$\xymatrix{ & 2\ar[r]\ar[rd]\ar[rdd] & 4\ar[r] & 7  \\ 1\ar[rd]\ar[ru]& & 5\ar[ru] &   \\
	& 3\ar[ruu]\ar[ru]\ar[r] & 6 &  }$$
and $x=1$, the image of $\partial_3$ has codimension $1$ in the kernel of $\partial_2$.

\subsection{Definition of the resolution}\label{subsec resolution def}
Denote by $\Lambda_0$ the semisimple subalgebra of $\Lambda$ generated by the stationary paths $c_a = c_{a,a}$ ($a\in X$).  Denote by $\delta_{z,a}$ the usual delta function, which is $1$ when $z=a$ and $0$ otherwise.  For each $i\geqslant 0$, we give $k\C^i$ the structure of a right $\Lambda_0$-module, whose action on the basis $\C^i$ is as follows: 
\begin{align*}
x\cdot c_a = \delta_{x,a}x, & \quad i=0, \\
(w,z)\cdot c_a = \delta_{z,a}(w,z),& \quad i> 0.
\end{align*}
We may thus define the vector space $k\C^i\otimes_{\Lambda_0}\Lambda$, which is a right $\Lambda$-module, treating $\Lambda$ as a $\Lambda_0\hbox{-}\Lambda$-bimodule in the obvious way.

Given $z\in X$, denote by $q_z = \sum_{u\leqslant z}c_{u,z}\in \Lambda$ the sum of the paths ending at $z$.  For each $i\geqslant 1$, we define the map
$d_i: k\C^i\otimes_{\Lambda_0}\Lambda\longrightarrow k\C^{i-1}\otimes_{\Lambda_0}\Lambda$ on the generators $\C^i\otimes 1$ of $k\C^i\otimes_{\Lambda_0}\Lambda$ as follows:
	$$\begin{array}{c c l}
	d_i((w,z)\otimes 1)&=&w\otimes q_z.
	\end{array}$$
Note that when tensoring elements, we use the symbol $\otimes$ rather than $\otimes_{\Lambda_0}$ (writing for instance $w\otimes q_z$ rather than $w\otimes_{\Lambda_0} q_z$).  The $d_i$ are easily checked to be well-defined homomorphisms of right $\Lambda$-modules.  

By Remark \ref{obs z' are less than z}, for any $(w,z)\in \C^n$ (some $n$) we have that $z'<z$ for every $(w',z') \in supp(w)$. Thus, $w\cdot E = w$ for $E=\sum_{e\leqslant z} c_e$ in $\Lambda_0$. Note that if ever $e,z,z'\in X$ with $e\leqslant z'\leqslant z$, then in $\Lambda$ we have $c_eq_{z'}q_z = c_{e,z} = c_eq_z$. Let $(w',z')$ in $supp(w)$ and $E'=\sum_{e\leqslant z'} c_e$ in $\Lambda_0$. Then in $k\mathcal{C}^n\otimes_{\Lambda_0}\Lambda$ we have
    $$w'\otimes q_{z'}q_z = w'E'\otimes q_{z'}q_z = w'\otimes E'q_{z'}q_z = w'\otimes E'q_z = w'E'\otimes q_z = w'\otimes q_z.$$
We use this observation in the proofs of Lemma \ref{lemma ds give complex} and Theorem \ref{projectiveresolution} below.
\begin{lema}\label{lemma ds give complex}
The sequence
	$$\xymatrix@R=0.8em{ \cdots \ar[r] & k\C^2\otimes_{\Lambda_0}\Lambda\ar[r]^-{d_2} & k\C^{1}\otimes_{\Lambda_0}\Lambda\ar[r]^-{d_{1}} & k\C^{0}\otimes_{\Lambda_0}\Lambda\ar[r] & 0}$$
is a chain complex.
\end{lema}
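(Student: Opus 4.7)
The plan is to show directly that $d_i \circ d_{i+1} = 0$ on each generator $(w,z)\otimes 1$ with $(w,z)\in \C^{i+1}$, by exploiting the $\Lambda_0$-balance in the tensor product and then invoking the fact that $w\in \ker\partial_i$.

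First, expand $w=\sum_t \lambda_t (w_t,z_t)$ with all $z_t<z$. Then $d_{i+1}((w,z)\otimes 1) = w\otimes q_z = \sum_t\sum_{u\leqslant z} \lambda_t(w_t,z_t)\otimes c_{u,z}$. The key observation is that in $k\C^i\otimes_{\Lambda_0}\Lambda$, we have $(w_t,z_t)\otimes c_{u,z} = (w_t,z_t)\cdot c_u\otimes c_{u,z} = \delta_{z_t,u}(w_t,z_t)\otimes c_{z_t,z}$, so only the term $u=z_t$ survives. Hence
\[
d_{i+1}((w,z)\otimes 1) = \sum_t \lambda_t (w_t,z_t)\otimes c_{z_t,z}.
\]

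Next, apply $d_i$ term by term, using right $\Lambda$-linearity: $d_i((w_t,z_t)\otimes c_{z_t,z}) = (w_t\otimes q_{z_t})\cdot c_{z_t,z} = w_t\otimes (q_{z_t}\cdot c_{z_t,z})$. In the incidence algebra we have $c_{v,z_t}\cdot c_{z_t,z}=c_{v,z}$ for every $v\leqslant z_t$ (since parallel paths are identified), so $q_{z_t}\cdot c_{z_t,z} = \sum_{v\leqslant z_t} c_{v,z}$. Thus
\[
(d_i\circ d_{i+1})((w,z)\otimes 1) = \sum_t \lambda_t\, w_t\otimes \sum_{v\leqslant z_t} c_{v,z}.
\]

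Finally, expand each $w_t$ in the fixed basis $\C^{i-1}$; write $w_t = \sum_j \nu_{t,j}e_j$, where $e_j=(w_j',z_j')$ enumerates $\C^{i-1}$. Because $e_j$ only contributes to $w_t$ when $z_j'<z_t$, and using again the $\Lambda_0$-balance trick to see that $e_j\otimes c_{v,z}=\delta_{v,z_j'}\,e_j\otimes c_{z_j',z}$, the double sum collapses to
\[
(d_i\circ d_{i+1})((w,z)\otimes 1) = \sum_j \Bigl(\sum_t \lambda_t\nu_{t,j}\Bigr)\, e_j\otimes c_{z_j',z}.
\]
Since $\partial_i(w)=\sum_t\lambda_t w_t = \sum_j(\sum_t\lambda_t\nu_{t,j})e_j = 0$ and the $e_j$ are linearly independent, every inner coefficient vanishes, so the expression is zero.

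The only real obstacle is bookkeeping: one must be careful that $\Lambda_0$-balance is used twice (once to collapse $q_z$ and once to collapse $q_{z_t}\cdot c_{z_t,z}$), and that the resulting coefficients match precisely those of $\partial_i(w)$ in the basis $\C^{i-1}$. Once this correspondence is made explicit, $d_i\circ d_{i+1}=0$ is an immediate consequence of $w\in\ker\partial_i$.
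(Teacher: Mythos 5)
Your proof is correct and follows essentially the same route as the paper: both reduce $d_i\circ d_{i+1}=0$ to the fact that $\partial_i\partial_{i+1}=0$ (i.e.\ that $w\in\ker\partial_i$) by using the $\Lambda_0$-balance of the tensor product to collapse the elements $q_z$ and $q_{z_t}c_{z_t,z}$. The paper carries this out more compactly by invoking the right $\Lambda$-linearity of $d_{i-1}$ and the identity $w'\otimes q_{z'}q_z=w'\otimes q_z$, whereas you track the same cancellation coefficient-by-coefficient in explicit bases; the conclusion and the key mechanism are identical.
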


\begin{pf}
We check that $d_{i-1}d_i = 0$ for $i\geqslant 2$ fixed.  Consider a generator $(w,z)\otimes 1$ of $k\C^i\otimes_{\Lambda_0} \Lambda$, write $w = \sum_{(w',z')\in \C^{i-1}} \alpha_{(w',z')}(w',z')$ recalling (cf.\ Remark \ref{obs z' are less than z}) that if $\alpha_{(w',z')}\neq 0$, then $z'<z$.  We have
\begin{align*}
d_{i-1}d_i((w,z)\otimes 1) & = d_{i-1}(w\otimes q_z) \\
					       & = d_{i-1}(\sum\alpha_{(w',z')}(w',z')\otimes 1)q_z \\
            			   & = \sum\alpha_{(w',z')}d_{i-1}((w',z')\otimes 1)q_z \\
            			   & = \sum\alpha_{(w',z')}w'\otimes q_{z'}q_z \\
            			   & = \sum\alpha_{(w',z')}w'\otimes q_z \\
            			   & = (\sum\alpha_{(w',z')}w')\otimes q_z \\
            			   & = \partial_{i-1}\partial_i((w,z))\otimes q_z \\
            			   & = 0
\end{align*}
by Lemma \ref{lema1}.
\end{pf}

\begin{teo}\label{projectiveresolution}
Let $X$ be a poset and $x\in X$. With definitions as above, the complex
	$$\xymatrix@R=0.8em{ \cdots \ar[r] & k\C^2\otimes_{\Lambda_0}\Lambda\ar[r]^-{d_2} & k\C^{1}\otimes_{\Lambda_0}\Lambda\ar[r]^-{d_{1}} & k\C^{0}\otimes_{\Lambda_0}\Lambda\ar[r] & S_x\ar[r] & 0}$$
where, writing $S_x = \langle s\rangle$, the final non-zero map sends $x\otimes c_{x,a}$ to $\delta_{x,a}s$, is a minimal projective resolution of the simple $\Lambda$-module $S_x$.
\end{teo}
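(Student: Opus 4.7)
The plan is to verify the three defining properties of a minimal projective resolution: projectivity, minimality, and exactness. For projectivity, each $(w,z)\in\C^i$ spans a one-dimensional $\Lambda_0$-submodule of $k\C^i$ on which only $c_z$ acts as the identity, so $k\C^i\otimes_{\Lambda_0}\Lambda\cong\bigoplus_{(w,z)\in\C^i}P_z$. For minimality, the explicit expansion $d_i((w,z)\otimes 1)=\sum_{(w',z')}\alpha_{(w',z')}(w',z')\otimes c_{z',z}$ together with $z'<z$ places the image inside $k\C^{i-1}\otimes_{\Lambda_0}\rad\Lambda$, and the kernel of the augmentation is $x\otimes\rad(c_x\Lambda)$. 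The real content of the theorem is exactness.

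For exactness I would first decompose the complex by the common endpoint of the paths. For each $b\in X$, let $M_i(b)$ be the $k$-span of the elements $(w,z)\otimes c_{z,b}$ with $(w,z)\in\C^i$ and $z\leqslant b$; using the tensor relation $(w,z)\otimes c_{u,c}=0$ for $u\neq z$, one checks directly that $d_i(M_i(b))\subseteq M_{i-1}(b)$ and that the augmentation is nonzero only on $M_0(x)$. Setting $V_i(b):=k\{(w,z)\in\C^i:z\leqslant b\}$, the map $(w,z)\mapsto(w,z)\otimes c_{z,b}$ identifies $(M_\bullet(b),d_\bullet)$ with $(V_\bullet(b),\partial_\bullet)$. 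For $b=x$ the complex reduces to $k\{x\}$ in degree $0$ augmenting isomorphically to $S_x$, and for $b\not\geqslant x$ it vanishes; the remaining task is to show $V_\bullet(b)$ is acyclic for every $b>x$.

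I would establish this by strong induction on $b$, using the decomposition $V_i(b)=V_i^{<b}\oplus V_i^{=b}$ (by whether $z<b$ or $z=b$). The key dichotomy is: on one hand, $\partial_i(V_i^{<b})\subseteq(\ker\partial_{i-1})_{b^-}$, since for each $b'\in b^-$ and each $(w,z)\in V_i(b')$ the image $w$ is a cycle of $\partial_{i-1}$ (by $\partial^2=0$) supported on $V_{i-1}^{<b'}$; on the other hand, $\partial_i|_{V_i^{=b}}$ is injective with image equal to the chosen complement of $(\ker\partial_{i-1})_{b^-}$ inside $(\ker\partial_{i-1})_b$, precisely because ${\B^i_b}'$ is defined to be a basis of that complement. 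Since the two images sit in complementary summands, any cycle $\xi=\xi^{<b}+\xi^{=b}$ forces $\partial_i\xi^{=b}=0$ and hence $\xi^{=b}=0$, giving $\ker\partial_i|_{V_i(b)}=(\ker\partial_i)_b$. The definition of $\C^{i+1}$ then writes $(\ker\partial_i)_b=(\ker\partial_i)_{b^-}+\partial_{i+1}(V_{i+1}^{=b})$: the inductive hypothesis applied to each $b'\in b^-$ with $b'>x$ puts the first summand in $\im\partial_{i+1}|_{V_{i+1}^{<b}}$ (the other $b'$ contribute zero because the corresponding $V_{i-1}^{<b'}$ are zero), while the second summand lies in $\im\partial_{i+1}|_{V_{i+1}(b)}$ by construction. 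The degree-zero case is immediate: for $b>x$, choosing any $y\in x^+$ with $y\leqslant b$ gives $\partial_1((x,y))=x$, witnessing surjectivity of $\partial_1|_{V_1(b)}$ onto $V_0(b)=k\{x\}$.

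The main obstacle will be setting up the dichotomy above: one must carefully track the supports of $\partial_i$ on the ${<}b$ and ${=}b$ parts, and verify that the complementarity built into the recursive construction of ${\B^i_b}'$ supplies exactly the splitting of $(\ker\partial_{i-1})_b$ needed to force the top component of a cycle to vanish. Once this is in hand, the reduction to cycles in $V_i^{<b}$ and their realization as boundaries via ${\B^{i+1}_b}'$ both fall out cleanly from the definitions, and the small ancillary checks at the base case $b\in x^+$ and in degree zero become routine.
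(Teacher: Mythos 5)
Your proof is correct. It ultimately rests on the same two combinatorial facts as the paper's argument -- that $k{\B^i_b}'$ is by construction a complement of $(\ker\partial_{i-1})_{b^-}$ in $(\ker\partial_{i-1})_{b}$ (this is exactly the paper's ``claim'' that a kernel element $g=gc_z$ involves only cycles $(w_j,z_j)$ with $z_j<z$), and that $(\ker\partial_i)_b$ is spanned by first components of $(i+1)$-cycles $(u,z')$ with $z'\leqslant b$ (Remark \ref{obs useful basis of ker partial}) -- but you package them differently. The paper argues one kernel element at a time: it takes a basis element $g=gc_z$ of $\ker d_i$, transports it to an element $v\in(\ker\partial_i)_z$, and lifts $v$ by invoking Remark \ref{obs useful basis of ker partial} as a black box. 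You instead split the entire augmented complex into a direct sum of subcomplexes $M_{\bullet}(b)$ indexed by $b\in X$, identify each with the combinatorial complex $V_{\bullet}(b)=(k\{(w,z)\in\C^{\bullet}:z\leqslant b\},\partial_{\bullet})$, and prove acyclicity of the latter by an explicit strong induction on $b$; this makes transparent that exactness of the $\Lambda$-module complex is a purely linear-algebraic statement about $(k\C^{\bullet},\partial_{\bullet})$ restricted below each $b$, and it absorbs the content of Remark \ref{obs useful basis of ker partial} into the induction rather than citing it. Your minimality argument is also more direct: you read $\im d_i\subseteq\rad(k\C^{i-1}\otimes_{\Lambda_0}\Lambda)$ straight off the condition $z'<z$ in the definition of $d_i$, whereas the paper re-uses its claim about kernel elements. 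In a full write-up the only points to tighten are the conventions in low degree (treating $x\in k\C^{0}$ as a cycle ``supported below $b$'' for every $b>x$, so that the dichotomy applies verbatim when $i=1$) and the remark that for a predecessor $b'$ with $b'\not>x$ one has $V_i(b')=0$ and hence $(\ker\partial_i)_{b'}=0$, so the inductive hypothesis is only ever needed for $b'>x$.
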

	
\begin{pf}
It is a standard fact (and easily checked) that if $S_z$ is the simple right $\Lambda$-module at the vertex $z$ of $X$, the $\Lambda$-module $S_z\otimes_{\Lambda_0}\Lambda$ is its projective cover.  From this, it follows that the modules $k\C^{i}\otimes_{\Lambda_0}\Lambda$ are projective for every $i$.  That the sequence is exact at $S_x$ and $k\C^{0}\otimes_{\Lambda_0}\Lambda$ is clear.
 
Fix $i\geqslant 1$.  We check that $\ker d_i\subseteq \im d_{i+1}$.  The kernel of $d_{i}$, as with any right $\Lambda$-module, has a basis of elements $g$ such that $g = g\cdot c_z$ for some $z\in X$.  Fix some such $g$. Hence $g = \sum_{j=1}^n\alpha_j(w_j,z_j)\otimes c_{z_j,z}$ for some $(w_j,z_j)\in \C^i$ with $z_j \leqslant z$ and $\alpha_j\in k$.  Denoting by $v$ the element $\sum_{j=1}^n \alpha_j(w_j,z_j) \in k\C^{i}$ we have $g = v\otimes q_z$, and 
	$$0 = d_i(g) = \sum_{j=1}^n\alpha_jw_j\otimes q_{z_j}c_{z_j,z} = (\sum_{j=1}^n\alpha_jw_j)\otimes q_z = \partial_i(v)\otimes q_z.$$
There is a well-defined linear map $k\C^{i-1}\otimes_{\Lambda_0} \Lambda \to k\C^{i-1}$ defined on basis elements by $b\otimes c_{u,y}\mapsto b\cdot c_u$, which sends $0 = (\sum_{j=1}^n\alpha_jw_j)\otimes q_z$ to $\partial_i(v)$, and hence $v\in \ker \partial_i$.

With the same $g$ as above, we claim that if $(w_j,z_j)$ appears with non-zero coefficient $\alpha_j$, then in fact $z_j < z$.  Reindexing if necessary, we may suppose that $z_1, \hdots, z_a$ are equal to $z$ (some $a\in \{0, \hdots, n\}$), while $z_{a+1}, \hdots z_n$ are strictly less than $z$.  The elements $w_1, \hdots, w_a$ must therefore be distinct (hence linearly independent) elements of ${\mathcal{D}^{i}_z}$, while $w_{a+1}, \hdots, w_n\in (\ker \partial_{i-1})_{z^-}$.  Since $\partial_i(v) = 0$,
	$$\sum_{j=1}^a\alpha_j w_j = -\sum_{j=a+1}^n\alpha_j w_j \in k{\mathcal{D}^{i}_z} \cap (\ker \partial_{i-1})_{z^-} = 0,$$
and hence $\alpha_j = 0$ for $j\in \{1, \hdots, a\}$, as claimed.

It follows from the above claim that $v\in (\ker \partial_i)_z$.  By Remark \ref{obs useful basis of ker partial}, there are elements $(u_1,z_1'), \hdots, (u_m,z_m')$ of $k\C^{i+1}$ with each $z_l'\leqslant z$ and such that $\{u_1, \hdots, u_m\}$ is a basis of $(\ker \partial_i)_z$.  Write
	$$v = \sum_{l=1}^m\beta_lu_l$$
for some $\beta_l\in k$ and consider $h = \sum_l \beta_l(u_l,z_l')\otimes c_{z_l',z}\in k\C^{i+1}\otimes_{\Lambda_0} \Lambda$.  Then 
	$$d_{i+1}(h) = \sum\beta_lu_l\otimes q_{z_l'}c_{z_l',z} =  (\sum \beta_lu_l)\otimes q_z = v\otimes q_z = g,$$
completing the proof that $\ker d_i\subseteq \im d_{i+1}$ and hence that the sequence is exact.

To show minimality, we will check that $\ker d_i\subseteq \rad (k\C^i\otimes_{\Lambda_0} \Lambda)$ \cite[Lemma VIII.2.1]{A97}.  We use again that if $g = \sum_{j=1}^n\alpha_j(w_j,z_j)\otimes c_{z_j,z}$ is in $\ker d_i$, then $z_j<z$.   Hence 
	$$\alpha_j(w_j,z_j)\otimes c_{z_j,z} = (\alpha_j(w_j,z_j)\otimes c_{z_j})\cdot c_{z_j,z}\in (k\C^i\otimes_{\Lambda_0} \Lambda)\cdot J(\Lambda) = \rad (k\C^i\otimes_{\Lambda_0} \Lambda)$$
for each $j$, so that $g\in \rad (k\C^i\otimes_{\Lambda_0} \Lambda)$ as required.
\end{pf}

\subsection{Application to Ext groups}\label{subsec Ext groups}
Our next result is a computationally easy description of the $\Ext$-groups for simple modules in an incidence algebra.  See \cite[Proposition 3.6]{C89} for a different description, which uses the Bongartz-Butler projective resolution \cite[Section 1.1]{B83}. Let $X$ be a partially ordered set with incidence algebra $\Lambda$, and $x,b\in X$.  We give a formula for $\Ext_{\Lambda}^n(S_x,S_b)$.

Consider the $i$-cycles constructed in Section \ref{subsec icycle def} for $x\in X$.  We use the size $|\B^i_b|$ of the sets $\B^i_b$ ($i\geqslant 2$), augmenting the definition to include $i=0,1$ as follows:
   	$$\begin{array}{r c l}
    |\B^{0}_b|=\begin{cases}
		1,\textnormal{ if } b=x \\
		0,\textnormal{ otherwise.}
	\end{cases} &\textnormal{ and } &
	|\B^{1}_b|=\begin{cases}
	1,\textnormal{ if } b\in x^+\\
	0,\textnormal{ otherwise.}
	\end{cases}
	\end{array}$$

\begin{prop}\label{extgroups}
Let $\Lambda=kX/I$ be an incidence algebra. Let $x,b$ be elements of $X$ and $S_x,S_b$ the corresponding simple $\Lambda$-modules.  For $i\geqslant 0$, we have
	$$\dim_k\Ext_{\Lambda}^i(S_x,S_b) = |\B^{i}_b|.$$
\end{prop}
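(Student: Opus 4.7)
The plan is to apply $\Hom_\Lambda(-,S_b)$ to the minimal projective resolution of $S_x$ given by Theorem \ref{projectiveresolution} and read off the dimensions from the resulting complex. First, I would decompose each term of the resolution into indecomposable projectives: for every $(w,z)\in \C^i$, the cyclic summand $(w,z)\otimes_{\Lambda_0}\Lambda$ is isomorphic to the indecomposable projective $P_z$, because the $\Lambda_0$-action on $(w,z)$ factors through the idempotent $c_z$. Grouping the basis $\C^i$ by the second coordinate, one obtains
$$k\C^{i}\otimes_{\Lambda_0}\Lambda \;\cong\; \bigoplus_{z\in X} P_z^{\,|\B^{i}_z|}$$
for every $i\geqslant 2$, and the same identity holds for $i=0,1$ with the augmented definitions (a single copy of $P_x$ for $i=0$ and a copy of $P_y$ for each $y\in x^+$ for $i=1$).

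Second, I would invoke the minimality established in the proof of Theorem \ref{projectiveresolution}, which says that each differential $d_i$ has image contained in $\rad(k\C^{i-1}\otimes_{\Lambda_0}\Lambda)$. Applying $\Hom_\Lambda(-,S_b)$ to such a map gives the zero map, because any $\Lambda$-homomorphism to a simple module vanishes on the radical of a projective. Therefore the complex $\Hom_\Lambda(k\C^{\bullet}\otimes_{\Lambda_0}\Lambda, S_b)$ has zero differentials, and
$$\Ext_{\Lambda}^{i}(S_x,S_b) \;\cong\; \Hom_\Lambda\bigl(k\C^{i}\otimes_{\Lambda_0}\Lambda,\,S_b\bigr).$$

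Third, I would use the standard identification $\Hom_\Lambda(P_z,S_b)\cong k$ if $z=b$ and $0$ otherwise. Combined with the decomposition above, this yields
$$\dim_k \Ext_{\Lambda}^{i}(S_x,S_b) \;=\; \dim_k \Hom_\Lambda\bigl(\bigoplus_{z\in X} P_z^{\,|\B^{i}_z|},\, S_b\bigr) \;=\; |\B^{i}_b|,$$
which is the claim. I would finish by briefly verifying the two base cases separately: $\Ext^0(S_x,S_b)=\Hom_\Lambda(S_x,S_b)$ is $k$ or $0$ depending on whether $b=x$, matching $|\B^0_b|$; and for $i=1$, $k\C^1\otimes_{\Lambda_0}\Lambda=\bigoplus_{y\in x^+}P_y$, so the $\Hom$ has dimension $1$ exactly when $b\in x^+$, matching $|\B^1_b|$.

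There is no substantive obstacle here: once Theorem \ref{projectiveresolution} is in hand, the proposition is essentially a counting argument. The only thing to be careful about is the bookkeeping in passing from the set $\C^i$ to the multiplicities $|\B^i_z|$, and the fact that the augmented definitions for $i=0,1$ were chosen precisely so that the formula holds uniformly for all $i\geqslant 0$.
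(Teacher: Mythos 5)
Your proposal is correct and follows essentially the same route as the paper: apply $\Hom_\Lambda(-,S_b)$ to the minimal resolution of Theorem \ref{projectiveresolution}, identify $k\C^i\otimes_{\Lambda_0}\Lambda$ with $\bigoplus_{(w,z)\in\C^i}P_z$, show the induced differentials vanish, and count the summands $P_b$. The only (harmless) difference is in how the vanishing of $d_i^*$ is justified: you invoke the general fact that a minimal resolution has $\im d_i\subseteq\rad(k\C^{i-1}\otimes_{\Lambda_0}\Lambda)$ and that maps to a simple module kill the radical, whereas the paper verifies directly from the formula $d_i((w,b)\otimes 1)=w\otimes q_b$ that any $\rho$ lands in $S_b\cdot c_{z'}=0$ for $z'<b$.
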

\begin{pf}
By Theorem \ref{projectiveresolution}, the complex below is a deleted projective resolution of $S_x$.
	$$\xymatrix@R=0.8em{ \cdots \ar[r] & k\C^2\otimes_{\Lambda_0}\Lambda\ar[r]^-{d_2} & k\C^{1}\otimes_{\Lambda_0}\Lambda\ar[r]^-{d_{1}} & k\C^{0}\otimes_{\Lambda_0}\Lambda\ar[r] & 0}$$ 	
Applying the contravariant functor $\Hom_{\Lambda}(-,S_b)$ and denoting by $d^*$ the map $(-\circ d)$ as standard, we obtain the complex
	$$\xymatrix@R=0.8em{ 0 \ar[r]& \Hom_{\Lambda}(k\C^{0}\otimes_{\Lambda_0}\Lambda,S_b) \ar[r]^-{d_{1}^*} & \Hom_{\Lambda}(k\C^{1}\otimes_{\Lambda_0}\Lambda,S_b) \ar[r]^-{d_{2}^*} & \cdots } $$
whose homology groups are $\ker d_{i+1}^*/\im d_{i}^* = \Ext_{\Lambda}^i(S_x,S_b)$, $i\geqslant 0$.  If $x\not\leqslant b$ then, for each $i\geqslant 0$, $\Ext_{\Lambda}^i(S_x,S_b) = 0$ and $|\B^{i}_b|=0$. Suppose now that $x\leqslant b$. We have that $k\C^i\otimes_{\Lambda_0}\Lambda\cong \bigoplus_{(w,z)\in \C^i} P_z$. Therefore, 
	$$\dim\Hom_{\Lambda}(k\C^{i}\otimes_{\Lambda_0}\Lambda,S_b) = |\{(w,z)\in \C^i\,:\,z=b\}| = |\B^i_b|.$$ 
The proposition will thus follow from the observation that the maps $d_i^*$ are zero.  To see this, we first claim that for any $l>0$, an element $\rho$ of $\Hom_{\Lambda}(k\C^{l}\otimes_{\Lambda_0}\Lambda,S_b)$ must send elements of the form $y\otimes \lambda$ with $y\in \C^l$ and $yc_b=0$, to zero: if $y = yc_a\in \mathcal{C}^l$ (some $a$) and $\rho(y\otimes \lambda)\neq 0$, then
$$0 \neq \rho (y\otimes \lambda) = \rho (y\otimes c_a\lambda)c_b = \rho(y\otimes 1)c_a\lambda c_b,$$
implying that $a = b$ as claimed, because non-trivial paths act as $0$ on $S_b$.  Fix $\rho\in \Hom_{\Lambda}(k\C^{i-1}\otimes_{\Lambda_0}\Lambda,S_b)$.  Then 
	$$d_i^*(\rho)((w,b)\otimes 1)=\rho d_i((w,b)\otimes 1) = \rho(w\otimes q_b) = 0$$
because $wc_b=0$ (Remark \ref{obs z' are less than z}).
\end{pf}
	
\section{Hochschild cohomology}
Recall that $\Lambda$-bimodules are in natural correspondence with right $\Lambda\otimes_k \Lambda^{op}$-modules ($\Lambda^{op}$ the opposite algebra of $\Lambda$), the right action on the bimodule $M$ being given by $m(\lambda\otimes \lambda') = \lambda'm\lambda$. The Hochschild cohomology groups $\HH^i(\Lambda, M)$ with coefficients in the $\Lambda$-bimodule $M$ are, by definition, the groups $\Ext_{\Lambda\otimes_k \Lambda^{op}}^i(\Lambda,M)$. There are several projective resolutions that allow the computation of Hochschild cohomology groups \cite[Section 1.5]{H89}, \cite[Section 2]{Hoc45}. For our purposes, the resolution \cite[Lemma 1.1]{C89} is of particular importance, thanks to its connection with $\Ext$-groups.
	
As in \cite{C89}, we extend the poset $X$ as follows: let $x^*$ and $y^*$ be two points not in $X$. Define $X^*=X\cup\{x^*,y^*\}$ with the same ordering on the elements of $X$ and with $x^*<x<y^*$  for every $x\in X$. The next result, due to Cibils, allows us to calculate the Hochschild cohomology groups using the tools developed in the previous sections.
	
\begin{teo}[{\cite[p.225]{C89}}]\label{exthochschild}
Let $X$ be a poset and $X^*$ its extension as above, with $\Lambda$ and $\Lambda^{*}$ the respective incidence algebras. Then
	$$\HH^i(\Lambda, \Lambda)\cong \Ext_{\Lambda^*}^{i+2}(S_{x^*},S_{y^*})$$
for all $i\geqslant 1$, where $S_{x^*}$ and $S_{y^*}$ are the respective simple modules in $\Lambda^*$-mod.
\end{teo}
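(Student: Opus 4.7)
The plan is to exhibit both sides of the isomorphism as the simplicial cohomology of the order complex $\Delta(X)$, whose $n$-simplices are the strict chains $c_0<c_1<\cdots<c_n$ in $X$. For the Hochschild side, I would use the ``simplicial'' projective $\Lambda\otimes\Lambda^{op}$-resolution of $\Lambda$,
$$P_n = \bigoplus_{c_0<c_1<\cdots<c_n}\Lambda e_{c_0}\otimes_k e_{c_n}\Lambda,$$
with differential an alternating sum of face maps (multiplying adjacent paths when an interior vertex is omitted, restricting a tensor factor when an endpoint is omitted); exactness follows from a standard contracting homotopy. Applying $\Hom_{\Lambda\otimes\Lambda^{op}}(-,\Lambda)$ and using the identification $\Hom_{\Lambda\otimes\Lambda^{op}}(\Lambda e_{c_0}\otimes_k e_{c_n}\Lambda,\Lambda)\cong e_{c_0}\Lambda e_{c_n}=k\cdot c_{c_0,c_n}$ (one dimensional since $c_0\leqslant c_n$), one recovers precisely the simplicial cochain complex of $\Delta(X)$ with coefficients in $k$, yielding $\HH^i(\Lambda,\Lambda)\cong H^i(\Delta(X);k)$ for $i\geqslant 1$.

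For the Ext side, I would apply Theorem~\ref{projectiveresolution} to $S_{x^*}$ in $\Lambda^*$-mod and then apply $\Hom_{\Lambda^*}(-,S_{y^*})$. The argument from the proof of Proposition~\ref{extgroups} shows that the induced differentials vanish, so $\dim\Ext^{i+2}_{\Lambda^*}(S_{x^*},S_{y^*})=|\B^{i+2}_{y^*}|$. Because every element of $X$ lies strictly below $y^*$ in $X^*$, the subspace $(\ker\partial_{i+1})_{y^*}$ coincides with $\ker\partial_{i+1}$, while $(\ker\partial_{i+1})_{(y^*)^-}$ is generated by those $(i+1)$-cycles already attached to the maximal elements of $X$. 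Unwinding the recursion from $\C^0=\{x^*\}$ and $\C^1=\{(x^*,m)\mid m\textnormal{ minimal in }X\}$, the elements of $\B^{i+2}_{y^*}$ record the ``new'' cycles appearing only at the global top; they should carry the same combinatorial data as degree-$i$ cohomology classes of $\Delta(X)$, the two extra levels corresponding to the prepended $x^*$ and the appended $y^*$.

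The main obstacle is making this final matching rigorous, since the $\B^i_z$ depend on non-canonical choices of complement while simplicial cohomology is canonical. The cleanest route I see is to build an explicit chain map between Cibils's bimodule resolution (interpreted through a suitable functor relating $\Lambda\otimes\Lambda^{op}$-modules to $\Lambda^*$-modules concentrated between the $x^*$- and $y^*$-strata) and the projective resolution of Theorem~\ref{projectiveresolution} for $S_{x^*}$, and to verify that this map induces a quasi-isomorphism, thereby bypassing any direct manipulation of the chosen bases. An alternative is an induction on $|X|$ tracking how the complement-picking step reacts to adjoining a new maximal element, mirrored by the corresponding inductive computation of $H^i(\Delta(X);k)$.
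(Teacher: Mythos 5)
The paper offers no proof of this statement at all --- it is quoted directly from Cibils --- so there is no internal argument to compare against; but your overall strategy, identifying both sides with the (reduced) simplicial cohomology of the order complex $\Delta(X)$, is exactly Cibils' original one. The Hochschild half of your sketch is essentially sound: the complex you write down is the reduced bar resolution $\Lambda\otimes_{\Lambda_0}r^{\otimes_{\Lambda_0}n}\otimes_{\Lambda_0}\Lambda$ (whose summands are indexed by strict chains because $r^{\otimes_{\Lambda_0}n}$ has basis the chains of length $n$), and applying $\Hom_{\Lambda\otimes\Lambda^{op}}(-,\Lambda)$ with the identification $\Hom(\Lambda e_{c_0}\otimes e_{c_n}\Lambda,\Lambda)\cong e_{c_0}\Lambda e_{c_n}\cong k$ does yield the simplicial cochain complex of $\Delta(X)$, hence $\HH^i(\Lambda,\Lambda)\cong H^i(\Delta(X);k)$.

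The genuine gap is the Ext half, and you name it yourself as ``the main obstacle''. Routing the computation through Proposition \ref{extgroups} leaves you having to prove that the basis-dependent quantities $|\B^{i+2}_{y^*}|$ equal $\dim\tilde H^{i}(\Delta(X);k)$, which is a theorem in its own right that you only gesture at (``they should carry the same combinatorial data''), and neither of your two proposed escape routes (an explicit quasi-isomorphism, or induction on $|X|$) is executed. The decisive step can be closed much more cheaply by avoiding the minimal resolution entirely: apply $S_{x^*}\otimes_{\Lambda^*}-$ to the same reduced bar resolution of $\Lambda^*$ (exactness is preserved because that resolution splits as a sequence of left modules); the $n$-th term becomes $\bigoplus P_{c_n}$ over strict chains $x^*<c_1<\cdots<c_n$ in $X^*$, and applying $\Hom_{\Lambda^*}(-,S_{y^*})$ keeps only the chains $x^*<c_1<\cdots<c_{n-1}<y^*$ with all $c_j\in X$. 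The faces deleting $x^*$ or $y^*$ die on the simples, and what remains is the augmented simplicial cochain complex of $\Delta(X)$ shifted by two, so $\Ext^{n}_{\Lambda^*}(S_{x^*},S_{y^*})\cong\tilde H^{n-2}(\Delta(X);k)$; for $i\geqslant 1$ reduced and unreduced cohomology agree and the theorem follows. As written, your proposal is a correct plan whose key matching step is missing.
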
 	

\begin{teo}\label{hochschildcohomology}
Let $\Lambda=kX/I$ be an incidence algebra. Then 
	$$\dim\HH^i(\Lambda,\Lambda) = 	
    \begin{cases}
        |\B^{2}_{y^*}|+1, \quad i=0, \\
		|\B^{i+2}_{y^*}|\;\quad,\quad i>0.  
	\end{cases}$$
\end{teo}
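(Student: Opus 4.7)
The theorem naturally splits into two cases ($i\geqslant 1$ and $i=0$) because Theorem \ref{exthochschild} only supplies the isomorphism $\HH^i(\Lambda,\Lambda)\cong \Ext^{i+2}_{\Lambda^*}(S_{x^*},S_{y^*})$ in degrees $i\geqslant 1$. For $i\geqslant 1$ the result is immediate: apply Proposition \ref{extgroups} to the incidence algebra $\Lambda^*$ of $X^*$, with the $i$-cycles constructed relative to the source vertex $x^*$ and target vertex $y^*$, to obtain $\dim_k\Ext^{i+2}_{\Lambda^*}(S_{x^*},S_{y^*})=|\B^{i+2}_{y^*}|$; then combine with Theorem \ref{exthochschild}.

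The $i=0$ case requires separate treatment. I would use the identification $\HH^0(\Lambda,\Lambda)=Z(\Lambda)$ together with the classical fact that, for an incidence algebra, $\dim_k Z(\Lambda)$ equals the number $c$ of connected components of the Hasse diagram of $X$: a central element $\sum_{a\leqslant b}\lambda_{a,b}c_{a,b}$ must satisfy $\lambda_{a,b}=0$ for $a<b$ (commutation with stationary paths), and the diagonal coefficients are forced to be constant on connected components by commutation with each $c_{a,b}$. It then suffices to prove $|\B^2_{y^*}|=c-1$.

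To this end, unwind the recursion of Section \ref{subsec icycle def} applied to $X^*$ with source $x^*$: one has $\C^1=\{(x^*,m):m\text{ is minimal in }X\}$, so $\ker\partial_1=(\ker\partial_1)_{y^*}$ is spanned by the differences $(x^*,m)-(x^*,m')$ and has dimension $m_0-1$, where $m_0$ denotes the number of minimal elements of $X$. The subspace $(\ker\partial_1)_{(y^*)^-}=\sum_{z'\text{ maximal in }X}(\ker\partial_1)_{z'}$ is spanned by those differences $(x^*,m)-(x^*,m')$ for which $m$ and $m'$ admit a common upper bound in $X$ (any such bound lifts to a maximal element by finiteness). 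The key combinatorial lemma is that the equivalence relation on the minimal elements of $X$ generated by ``shares a common upper bound'' coincides with ``lies in the same connected component of the Hasse diagram of $X$''; granting this, a standard dimension count gives $|\B^2_{y^*}|=(m_0-1)-(m_0-c)=c-1$.

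The main obstacle will be the combinatorial identification just described. The nontrivial direction (same component implies equivalent) can be handled by taking any zigzag path in the Hasse diagram of $X$ between two minimal elements and selecting, at each vertex of the zigzag, some minimal element of $X$ lying below it; consecutive chosen minimals then share the corresponding zigzag vertex as a common upper bound, placing them in the transitive closure of the ``shared upper bound'' relation.
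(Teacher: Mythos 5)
Your proposal is correct and follows essentially the same route as the paper: the case $i\geqslant 1$ is handled by combining Proposition \ref{extgroups} (applied to $\Lambda^*$ with source $x^*$) with Cibils' Theorem \ref{exthochschild}, and the case $i=0$ by the identification $\HH^0(\Lambda,\Lambda)\cong Z(\Lambda)\cong k^c$ ($c$ the number of connected components) followed by the dimension count $|\B^2_{y^*}|=\dim(\ker\partial_1)_{y^*}-\dim(\ker\partial_1)_{(y^*)^-}=(m_0-1)-(m_0-c)=c-1$, with $m_0$ the number of minimal elements of $X$. The only difference is cosmetic: you compute $\dim(\ker\partial_1)_{(y^*)^-}$ via the equivalence relation on minimal elements generated by ``admits a common upper bound'' (identified with Hasse-diagram connectivity by your zigzag argument), whereas the paper incrementally builds a basis of each component's contribution by repeatedly choosing a maximal element bridging a subset of minimal elements and its complement --- both versions rest on the same connectivity observation.
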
	

\begin{pf}
The claim for $i>0$ follows from Proposition \ref{extgroups} and Theorem \ref{exthochschild}, so it remains to check for $i=0$. For any finite dimensional algebra $\Lambda$, $\HH^0(\Lambda,\Lambda)$ is isomorphic to the center $Z(\Lambda)$ \cite[p.9]{W19}.  But when $\Lambda = kQ/I$ with $Q$ acyclic, then $Z(\Lambda)\cong k^s$ where $s$ is the number of connected components of $Q$, so in our case, $\dim\HH^0(\Lambda, \Lambda)$ is simply the number of connected components $s$ of $X$.  Our task is thus to check that $|\B^{2}_{y^*}| = s-1$.  Recall that
	$$\partial_1 : k\{ (x^*,x_i)\,|\,x_i\in (x^*)^+\} \to k\{ x^*\}$$
is given on the basis as projection on the first coordinate, and that $|\B^{2}_{y^*}|$ is the codimension of $\sum_{y\in (y^*)^-}\ker (\partial_1)_y$ in $\ker (\partial_1)_{y^*} = \ker \partial_1$.  

Denote the connected components of $X$ as $X_1, \hdots, X_s$.  For $j\in \{1, \hdots, s\}$ and any proper subset $T$ of $X_j\cap (x^*)^+$, there must be some $y\in X_j\cap (y^*)^-$ having both an element $z$ of $T$ and an element $z'$ of $T' = (X_j\cap (x^*)^+)\setminus T$ as predecessor, because otherwise $X_j$ is not connected.  Beginning with $T = \{z\}$ with $z$ an arbitrary element of $X_j\cap (x^*)^+$ and using the above observation repeatedly, in the obvious way, until we have every element of $(X_j\cap (x^*)^+)$, we obtain $|(X_j\cap (x^*)^+)|-1$ linearly independent elements of $\sum_{y\in X_j\cap (y^*)^-}\ker(\partial_1)_y$, which must therefore be a basis of this space.  Noting further that
	$$(\ker \partial_1)_{(y^*)^-} = \sum_{j=1}^s
    \sum_{y\in X_j\cap (y^*)^-}\ker(\partial_1)_y =
    \bigoplus_{j=1}^s  
    \sum_{y\in X_j\cap (y^*)^-}\ker(\partial_1)_y
    $$
has dimension $|(x^*)^+|-s$ by the above calculations, we obtain
	$$|\B^{2}_{y^*}| = \dim \ker(\partial_1)_{y^*} - \dim \ker(\partial_1)_{(y^*)^-} = (|(x^*)^+| - 1) - (|(x^*)^+|-s) = s-1,$$
as required.
\end{pf}
\begin{obs}\label{obs top spaces}
There is a natural correspondence between finite posets and finite $T_0$ topological spaces (that is, finite topological spaces in which given two distinct points, there is an open set containing one of them but not the other).  Gerstenhaber and Schack \cite[p.148]{GS83} show that the Hochschild cohomology groups of the incidence algebra $\Lambda = kX$ of the poset $X$ are isomorphic to the simplicial cohomology groups with coefficients in $k$ of the simplicial complex obtained from $X$ by taking the $n$-simplices to be chains in $X$ of length $n$.  But McCord \cite[Theorem 1]{M66} has shown that these groups are isomorphic to the singular cohomology groups of the topological space corresponding to $X$. Thus Theorem \ref{hochschildcohomology} can be used to efficiently calculate the singular cohomology groups of finite $T_0$ topological spaces with coefficients in $k$.
\end{obs}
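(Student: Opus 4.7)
The statement is essentially an assembly of three existing facts, so my plan is organisational rather than computational: I will describe precisely how the chain of identifications is set up, and then explain how Theorem \ref{hochschildcohomology} plugs in at the end.

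First, I would make explicit the natural bijection between finite posets and finite $T_0$ spaces. Given a finite $T_0$ space $Y$, define $x \leqslant y$ iff $x$ lies in the closure of $\{y\}$; equivalently, iff every open set containing $x$ contains $y$. The $T_0$ axiom is exactly antisymmetry, and transitivity and reflexivity are immediate, so this is a partial order. Conversely, given a finite poset $X$, declare the open sets to be the up-sets $U_x = \{y\in X\,|\,x\leqslant y\}$ (and unions thereof); antisymmetry then yields the $T_0$ separation property. One checks that the two constructions are mutually inverse, giving the promised bijection.

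Second, I would invoke \cite[p.148]{GS83} to identify $\HH^i(kX, kX)$ with $H^i(\Delta(X); k)$, the simplicial cohomology with coefficients in $k$ of the order complex $\Delta(X)$, whose $n$-simplices are the strict chains $x_0 < x_1 < \cdots < x_n$ in $X$. This is the only substantive input from the literature; its proof proceeds via a comparison of the Hochschild bar complex for $\Lambda = kX$ with the simplicial cochain complex of $\Delta(X)$, exploiting the fact that the basis of $\Lambda$ indexed by intervals in $X$ is combinatorially adapted to chains.

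Third, I would invoke \cite[Theorem 1]{M66}: McCord constructs a continuous map $|\Delta(X)| \to Y$ (where $Y$ is the finite $T_0$ space associated to $X$) and proves it is a weak homotopy equivalence. Since singular cohomology is a homotopy invariant and the simplicial cohomology of $\Delta(X)$ agrees with the singular cohomology of its geometric realization, this gives $H^i(\Delta(X); k) \cong H^i_{\mathrm{sing}}(Y; k)$. Composing with the Gerstenhaber--Schack isomorphism yields $\HH^i(kX, kX) \cong H^i_{\mathrm{sing}}(Y; k)$.

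Combining these three identifications with Theorem \ref{hochschildcohomology}, which expresses $\dim \HH^i(\Lambda, \Lambda)$ in terms of the combinatorial data $|\B^{i+2}_{y^*}|$ (and a correction term for $i=0$), delivers the claim: the singular cohomology of a finite $T_0$ space is computable directly from its poset via the $i$-cycle algorithm. The main ``obstacle'' is not mathematical but expository, namely reconciling conventions (up-sets versus down-sets in the topology, opposite algebras when passing between left and right modules, and the indexing shift $i \leftrightarrow i+2$ in Theorem \ref{exthochschild}); since the statement is a remark, I would keep the discussion brief and cite \cite{GS83} and \cite{M66} for the substantive content.
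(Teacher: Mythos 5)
Your proposal is correct and follows exactly the route the paper intends for this remark: the poset/finite-$T_0$-space correspondence, the Gerstenhaber--Schack identification of $\HH^i(kX,kX)$ with the simplicial cohomology of the order complex, McCord's weak homotopy equivalence, and finally Theorem \ref{hochschildcohomology} as the computational engine. The extra detail you supply (the specialization order, the explicit statement that McCord's map $|\Delta(X)|\to Y$ is a weak homotopy equivalence, and the caution about conventions) is a sound elaboration of the same argument rather than a different approach.
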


\begin{obs}\label{obs eficience}
    A basic implementation of an algorithm calculating the Hochschild cohomology groups of an incidence algebra using the $i$-cycles algorithm can be found in \cite{BMMGithubRepo}, where both pseudocode and a functional algorithm in SAGE are available.  The $i$-cycles algorithm is considerably faster than the very few algorithms available in computational algebra systems.  One example is the function ``\texttt{CompactProjectiveResolution}'' in Magma \cite{magma}, which calculates a minimal projective resolution of a module for a given algebra.  The code available in \cite{BMMGithubRepo} can be used to compare the run-times of our rudimentary implementation of the $i$-cycles algorithm in SAGE with that of CompactProjectiveResolution. Having copy and pasted the functions from \cite[compare\_algorithms.sage]{BMMGithubRepo} into a SAGE command line, the following code takes 10 random posets with 30 vertices, and gives the mean times in seconds for the two algorithms:
\end{obs}
\begin{lstlisting}[language=Python]
posets_list = [posets.RandomPoset(30, uniform(0, 1)) for _ in range(10)]
iCyclesTimes = []; CPRTimes = []
for X in posets_list:
    iCyclesTimes.append(compute_time(iCycles, X, 0))
    Send_poset_to_Magma(X)
    CPRTimes.append(compute_time(magma.eval,"PR:=CompactProjectiveResolution(S_X,m_X);"))

print("Mean iCycles time:", mean(iCyclesTimes), "seconds")
print("Mean CompactProjectiveResolution time:", mean(CPRTimes), "seconds")
\end{lstlisting}

\noindent Output:
\begin{lstlisting}
Mean iCycles time: 0.03402 seconds
Mean CompactProjectiveResolution time: 420.43140 seconds
\end{lstlisting}

\medskip 

\section{Acknowledgments}
Funding: The second author was partially supported by CNPq Universal Grant 402934/2021-0, CNPq Produtividade 1D Grant 303667/2022-2, and FAPEMIG Universal Grants APQ-00971-22 and APQ-03491-25. The third author was supported by CAPES Doctoral Grant 88887.465442/2019-00.

\end{document}